\newtheorem{theorem}{Theorem}[section]
\newtheorem{proposition}[theorem]{Proposition}
\newtheorem{corollary}[theorem]{Corollary}
\theoremstyle{definition}
\theoremstyle{remark}
\numberwithin{equation}{section}
\begin{document}
\setcounter{page}{1}

%-------------------------- Pleased do not change the following line-------------------------------------------
%\noindent \textcolor[rgb]{0.99,0.00,0.00}{This is a submission to one of journals of}\\[.5in]
%--------------------------------------------------------------------------------------------------------------

\title[on THE NON-COMMUTATIVE NEWTON BINOMIAL FORMULA
] {on THE NON-COMMUTATIVE NEWTON BINOMIAL FORMULA
}
\author[A. Hosseini]{A. Hosseini$^{\ast}$}
\address{ A. Hosseini, Department of Mathematics, Kashmar Higher Education Institute, Kashmar, Iran}
\email{\textcolor[rgb]{0.00,0.00,0.84}{hussi.kashm@gmail.com}}

\author[M. Mohammadzadeh Karizaki]{M. Mohammadzadeh Karizaki}
\address{ M. Mohammadzadeh Karizaki, University of Torbat Heydarieh, Torbat heydariyeh,
Iran}
\email{\textcolor[rgb]{0.00,0.00,0.84}{ m.mohammadzadeh@torbath.ac.ir}}

%\dedicatory{This paper is dedicated to Professor }

\subjclass[2010]{47B47, 47B48, 11B65}

\keywords{Newton's binomial formula; non-commutative Newton's binomial formula; derivation; generalized derivation; Banach algebra.}

\date{Received: xxxxxx; Revised: yyyyyy; Accepted: zzzzzz.
\newline \indent $^{*}$ Corresponding author}

\begin{abstract}
In this article, using generalized derivations, we obtain a simple idea to prove the non-commutative
Newton binomial formula in unital algebras and then, we extend that formula
to non-unital algebras. Additionally, we establish the non-commutative Newton binomial formula with a negative power.
\end{abstract} \maketitle

\section{Introduction and preliminaries}
Throughout this article, let $\mathcal{A}$ be an associative complex algebra. If $\mathcal{A}$ is unital, then \textbf{1} stands for the identity  element. In this paper, we present an interesting relationship between generalized derivations and the non-commutative Newton's binomial formula. So, the concept of a generalized derivation plays a fundamental role in this regard. Recall that a linear mapping $d:\mathcal{A} \rightarrow \mathcal{A}$ is called a derivation if $d(ab) = d(a)b + ad(b)$ for all $a, b \in \mathcal{A}$. For example, let $b$ be an arbitrary fixed element of $\mathcal{A}$. A linear mapping $d_b:\mathcal{A} \rightarrow \mathcal{A}$ defined by $d_b(a) = ba - ab$ for any $a \in \mathcal{A}$, is a derivation which is called an inner derivation. Now we state the concept of a generalized derivation which was introduced by Bre$\check{s}$ar \cite{B4}. A linear mapping $\delta: \mathcal{A} \rightarrow \mathcal{A}$ is called a generalized derivation if there exists a derivation $d:\mathcal{A} \rightarrow \mathcal{A}$ such that $\delta$ satisfies $\delta(ab) = \delta(a) b + a d(b)$ for all $a, b \in \mathcal{A}$. Let $b_1, b_2$ be two arbitrary elements of $\mathcal{A}$. A linear mapping $\delta_{b_1, b_2}:\mathcal{A} \rightarrow \mathcal{A}$ defined by $\delta_{b_1, b_2}(a) = b_1 a - a b_2$ ($a \in \mathcal{A}$) is a generalized derivation associated with both the inner derivations $d_{b_1}$ and $d_{b_2}$. Indeed, we have $\delta_{b_1, b_2}(ab) = \delta_{b_1, b_2}(a)b + a d_{b_2}(b) = d_{b_1}(a) b + a \delta_{b_1, b_2}(b)$ for all $a, b \in \mathcal{A}$. We say that $\delta_{b_1, b_2}$ is an inner generalized derivation. As seen, a generalized derivation $\delta$ can satisfy $\delta(ab) = a \delta(b) + d(a) b$ for all $a, b \in \mathcal{A}$.
%In the following, some significant works on the non-commutative Newton's binomial formula are reviewed.
In order to complete the introduction, we recall the Newton's binomial formula. let $a$ and $b$ be two elements of $\mathcal{A}$ which commute with each other, i.e. $ab = ba$. Newton's binomial formula is as follows:\\
$$(a + b)^n = \sum_{k = 0}^{n}\Big(_{k}^{n}\Big)a^{n - k}b^{k},$$ where $\Big(_{k}^{n}\Big) = \frac{n!}{(n - k)! k!}$. Since the binomial coefficients are integers, this identity makes sense in an arbitrary ring with $\textbf{1}$. Newton's binomial formula occurs naturally in varied contexts, including combinatorics, number theory, and mathematical physics. That is why we consider it as a very important formula in Mathematics. The non-commutative binomial formula may be played a fundamental role in quantum group theory (see \cite{M1, M2} for more details in this issue). So far, several generalizations have been given for the Newton's binomial formula which we will state them below.

An important generalization is obtained by considering polynomials in two non-commuting variables $x$ and $y$ satisfying $xy = q yx$ where $q$ is a scalar. In that case one has
\begin{align}
(x + y)^n = \sum_{k = 0}^{n}\frac{(q ; q)_n}{(q ; q)_k (q ; q)_{n  k}}y^{k} x^{n - k},
\end{align}
where we use the standard notation $$(a ; q)_k = \prod_{i = 0}^{k - 1}( 1 - a q^i).$$ Since the $q$-binomial coefficients occurring in (1.1) are polynomials in $q$ (the Gaussian polynomials), this identity makes sense for $x, y$, and $q$ in an arbitrary ring, assuming also
that $q$ commutes with $x$ and $y$. The $q$-binomial formula (1.1) was first given explicitly by
Sch$\ddot{u}$tzenberger \cite{Sc}. In 1999, Benaoum \cite{B1} proved the general formula below:
$$(x + y)^n = \sum_{k = 0}^{n}\frac{(q ; q)_n}{(q ; q)_k (q ; q)_{n  k}}\prod_{j = 0}^{k - 1}\Big( 1 + [j]_{q} h \Big)y^{k} x^{n - k},$$ where $[j]_{q} = 1 + q + ... + q^{j - 1}$ and $x, y$ satisfy $xy = q yx + hy^2$. For more informative details in this regard see \cite{B1, B2, C} and references therein. Recently, Walter Wyss \cite{W1} has obtained an interesting non-commutative Newton's binomial formula on a unital algebra $\mathcal{A}$ as follows:\\
\begin{align*}
(a + b)^n = \sum_{k = 0}^{n}\Big(_{k}^{n}\Big)\Bigg[\Big(a + d_b\Big)^{k}(\textbf{1})\Bigg]b^{n - k},
\end{align*}
where $d_b: \mathcal{A} \rightarrow \mathcal{A}$ is an inner derivation. In this paper, we present a procedure by which one can easily obtain the Wyss formula. Indeed, our formula is the extension of the Wyss formula. In short, our discussion is as follows. Let $\mathcal{A}$ be a unital algebra and let $a, b$ be two arbitrary elements of $\mathcal{A}$. Then for any non-negative integer $n$, it holds that
\begin{align*}
a^{n} = \sum_{k = 0}^{n}\Big(_{k}^{n}\Big)\delta_{a, b}^{n - k}(\textbf{1})b^{k},
\end{align*}
where $\delta_{a, b}$ is an inner generalized derivation. Replacing $a + b$ instead of $a$ and $c$ instead of $b$ in the previous formula, we get that
\begin{align*}
(a + b)^n & = \sum_{k = 0}^{n}\Big(_{k}^{n}\Big)\delta_{a + b, c}^{n - k}(\textbf{1})c^{k},
\end{align*}
for any $a, b , c \in \mathcal{A}$. Moreover, we achieve a non-commutative Newton's binomial formula in the non-unital algebras. Eventually, we obtain a non-commutative binomial formula with a negative power. Indeed, under certain conditions, we show that if $\mathcal{A}$ is a unital Banach algebra, then for any $a, b \in \mathcal{A}$ and $n \in \mathbb{N}$ we have
\begin{align*}
(a + b)^{-n} & = \sum_{k = 0}^{\infty}\Big(_{k}^{-n}\Big)\lambda^{-n - k}(a + b_1)^{k} \\ & = \sum_{k = 0}^{\infty}\sum_{j = 0}^{k}\Big(_{k}^{-n}\Big)\Big(_{j}^{k}\Big)\lambda^{-n - k}\delta_{a + b_1, b_1}^{k - j}(\textbf{1})b_1^{j},
\end{align*}
where $b_1 = b - \lambda \textbf{1}$, $\lambda$ is a suitable complex number and $$\Big(_{k}^{-n}\Big) = \frac{(-n)(-n - 1) ... (-n - k + 1)}{k!} = \frac{(-1)^{k}n(n + 1) ... (n + k - 1)}{k!}.$$
%%%%%%%%%%%%%%%%%%%%%%%%%%%%%%%%%%%%%%%%%%%%%%%%%%%%%%%%%%%%%%%%%%%%%%%%%%%%%%%%%%%%%%%%%
\section{results and proofs}
Throughout this section, without further mention, if an algebra is unital, then $\textbf{1}$ stands for the identity element. Recall that if $b_1, b_2$ are two arbitrary fixed elements of an algebra $\mathcal{A}$, then a linear mapping $\delta_{b_1, b_2}:\mathcal{A} \rightarrow \mathcal{A}$ defined by $\delta_{b_1, b_2}(a) = b_1 a - a b_2$ ($a \in \mathcal{A}$) is a generalized derivation associated with both the inner derivations $d_{b_1}$ and $d_{b_2}$ (see Introduction). The set of all invertible elements of an algebra $\mathcal{A}$ is denoted by $Inv(\mathcal{A})$. Let $b \in \mathcal{A}$ and $c \in Inv(\mathcal{A})$. We introduce a linear mapping $\alpha_{b, c}:\mathcal{A} \rightarrow \mathcal{A}$ by $\alpha_{b, c}(a) = b a c^{-1}$. A straightforward verification shows that $\alpha_{b, c}(a_1 a_2) = \alpha_{b, c}(a_1)\alpha_{c, c}(a_2)$ for all $a_1, a_2 \in \mathcal{A}$. In the following, we first prove an auxiliary result which will be used extensively to conjecture the non-commutative Newton's binomial formula in any unital algebra. The proposition below has been proved in \cite{H}, but in order to make this paper self contained, we state it with its proof here. Recall that Pascal's Formula (or Pascal's Rule) is the equation $$\Big(^{n}_{k}\Big) + \Big( ^{n}_{k - 1} \Big) = \Big(_{k}^{n + 1}\Big)$$ for $n, k \in \mathbb{Z}_{+}$ and $k < n$. Let $\mathcal{A}$ be a unital Banach algebra. It is a well-known fact in the theory of Banach algebras that $e^{a} = \sum_{n = 0}^{\infty}\frac{a^n}{n !}$ for any $a \in \mathcal{A}$.
\\
\\
\begin{proposition}\label{1} Let $\mathcal{A}$ be a unital Banach algebra, and let $b_1, b_2$ be two arbitrary elements of $\mathcal{A}$. Then, $e^{\delta_{b_1, b_2}} = \alpha_{e^{b_1}, e^{b_2}}$.
\end{proposition}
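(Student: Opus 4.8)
The plan is to realize $\delta_{b_1, b_2}$ as a difference of two commuting bounded operators on the Banach space $\mathcal{A}$ and then factor its exponential. Concretely, introduce the left and right multiplication operators $L_{b_1}, R_{b_2} \in B(\mathcal{A})$ defined by $L_{b_1}(a) = b_1 a$ and $R_{b_2}(a) = a b_2$, so that $\delta_{b_1, b_2} = L_{b_1} - R_{b_2}$. By associativity, $L_{b_1}R_{b_2}(a) = b_1 a b_2 = R_{b_2}L_{b_1}(a)$ for every $a$, hence $L_{b_1}$ and $R_{b_2}$ commute in $B(\mathcal{A})$. Both are bounded (with $\|L_{b_1}\| \le \|b_1\|$ and $\|R_{b_2}\| \le \|b_2\|$), so $\delta_{b_1, b_2}$ is bounded and its exponential $e^{\delta_{b_1, b_2}} = \sum_{n = 0}^{\infty}\frac{1}{n!}\delta_{b_1, b_2}^{n}$ converges in $B(\mathcal{A})$.

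Next I would invoke the standard fact that exponentials of commuting bounded operators factor: since $L_{b_1}$ and $-R_{b_2}$ commute, $e^{L_{b_1} - R_{b_2}} = e^{L_{b_1}}e^{-R_{b_2}}$. It then remains to identify the two factors. Because $L_{b_1}^{k}(a) = b_1^{k}a$, term-by-term summation gives $e^{L_{b_1}}(a) = \sum_{k}\frac{b_1^{k}}{k!}a = e^{b_1}a$, i.e. $e^{L_{b_1}} = L_{e^{b_1}}$; similarly $e^{-R_{b_2}}(a) = a\sum_{k}\frac{(-b_2)^{k}}{k!} = a e^{-b_2}$, i.e. $e^{-R_{b_2}} = R_{e^{-b_2}}$. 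Since $b_2$ commutes with $-b_2$, we have $e^{-b_2} = (e^{b_2})^{-1}$. Combining these, for every $a \in \mathcal{A}$,
$$e^{\delta_{b_1, b_2}}(a) = e^{L_{b_1}}\big(e^{-R_{b_2}}(a)\big) = e^{b_1}\,a\,e^{-b_2} = e^{b_1}\,a\,(e^{b_2})^{-1} = \alpha_{e^{b_1}, e^{b_2}}(a),$$
which is exactly the claimed identity $e^{\delta_{b_1, b_2}} = \alpha_{e^{b_1}, e^{b_2}}$.

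The one point needing care --- and what I expect to be the main (if modest) obstacle --- is the justification of the factorization $e^{L_{b_1} - R_{b_2}} = e^{L_{b_1}}e^{-R_{b_2}}$ together with the rearrangement identifying each factor. If one prefers to avoid quoting the commuting-exponential lemma, the same conclusion can be reached by hand: expand $\delta_{b_1, b_2}^{n} = (L_{b_1} - R_{b_2})^{n} = \sum_{k = 0}^{n}\binom{n}{k}(-1)^{k}L_{b_1}^{n - k}R_{b_2}^{k}$ via the binomial theorem (valid since the two operators commute), so that $\delta_{b_1, b_2}^{n}(a) = \sum_{k = 0}^{n}\binom{n}{k}(-1)^{k}b_1^{n - k}a\,b_2^{k}$. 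The double series $\sum_{n}\frac{1}{n!}\delta_{b_1, b_2}^{n}(a)$ converges absolutely in $\mathcal{A}$, so it may be reindexed as a Cauchy product and factored into $\big(\sum_{m}\frac{b_1^{m}}{m!}\big)a\big(\sum_{k}\frac{(-b_2)^{k}}{k!}\big) = e^{b_1}a e^{-b_2}$. Either route yields the stated result.
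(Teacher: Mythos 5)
Your argument is correct, and your primary route is genuinely different from the paper's. The paper never introduces the multiplication operators $L_{b_1}$ and $R_{b_2}$: it proves the explicit power formula $\delta_{b_1,b_2}^{n}(a)=\sum_{k=0}^{n}(-1)^{k}\binom{n}{k}b_1^{\,n-k}a\,b_2^{\,k}$ directly by induction on $n$ using Pascal's rule, and then sums the exponential series and factors it as a Cauchy product --- essentially your ``by hand'' fallback, with the induction replacing your appeal to the binomial theorem for commuting operators. Your main route, decomposing $\delta_{b_1,b_2}=L_{b_1}-R_{b_2}$ and invoking the commuting-exponentials lemma $e^{A+B}=e^{A}e^{B}$ together with the identifications $e^{L_{b_1}}=L_{e^{b_1}}$ and $e^{-R_{b_2}}=R_{e^{-b_2}}$, is shorter and more conceptual, and makes the ``why'' transparent (the conjugation structure of $\alpha_{e^{b_1},e^{b_2}}$ is exactly the product of a left and a right multiplication). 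What the paper's more pedestrian route buys is the explicit formula for $\delta_{b_1,b_2}^{n}$ as a standalone byproduct, which is reused later (in Corollary 2.3) to expand $\delta_{a,b}^{n-k}(\mathbf{1})$; your second sketch recovers that formula too, so nothing is lost either way. One small point of care in your main route: the identity $e^{L_{b_1}}=L_{e^{b_1}}$ deserves a word of justification (the map $x\mapsto L_{x}$ is a continuous unital algebra homomorphism $\mathcal{A}\to B(\mathcal{A})$, hence commutes with convergent power series), but this is routine.
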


\begin{proof} First, by induction we show that
\begin{align}
\delta_{b_1, b_2}^{n}(a) = \sum_{k = 0}^{n}(-1)^{k}\Big(_{k}^{n}\Big)b_1^{n - k} a b_2^{k},
\end{align}
for each non-negative integer $n$ and any $a \in \mathcal{A}$. Obviously, (2.1) is true for $n = 1$. Suppose that the statement (2.1) holds for $n$ (Induction Hypothesis). Our task is to prove that (2.1) is also true for $n + 1$. Using induction hypothesis and Pascal's Formula, we have the following expressions:
\begin{align*}
\delta_{b_1, b_2}^{n + 1}(a) & = \delta_{b_1, b_2}\Big(\delta_{b_1, b_2}^{n}(a)\Big) \\ & = b_1 \delta_{b_1, b_2}^{n}(a) - \delta_{b_1, b_2}^{n}(a) b_2 \\ & = b_1 \sum_{k = 0}^{n}(-1)^{k}\Big(_{k}^{n}\Big)b_1^{n - k} a b_2^{k} - \sum_{k = 0}^{n}(-1)^{k}\Big(_{k}^{n}\Big)b_1^{n - k} a b_2^{k} b_2 \\ & = \sum_{k = 0}^{n}(-1)^{k}\Big(_{k}^{n}\Big)b_1^{n + 1 - k} a b_2^{k} - \sum_{k = 0}^{n}(-1)^{k}\Big(_{k}^{n}\Big)b_1^{n - k} a b_2^{k + 1} \\ & = \sum_{k = 0}^{n}(-1)^{k}\Big(_{k}^{n}\Big)b_1^{n + 1 - k} a b_2^{k} + \sum_{k = 1}^{n + 1}(-1)^{k}\Big(_{k - 1}^{n}\Big)b_1^{n - k + 1} a b_2^{k} \\ & = \sum_{k = 1}^{n}(-1)^{k} \Bigg[\Big(_{k}^{n}\Big) + \Big(_{k - 1}^{n}\Big)\Bigg]b_1^{n + 1 - k} a b_2^{k} + b_1^{n + 1}a + (-1)^{n + 1}a b_2^{n + 1} \\ & = \sum_{k = 1}^{n}(-1)^{k}\Big(_{k}^{n + 1}\Big)b_1^{n + 1 - k} a b_2^{k} +  b_1^{n + 1}a + (-1)^{n + 1}a b_2^{n + 1} \\ & = \sum_{k = 0}^{n}(-1)^{k}\Big(_{k}^{n + 1}\Big)b_1^{n + 1 - k} a b_2^{k}
\end{align*}
It means that equation (2.1) is true for all $a, b_1, b_2 \in \mathcal{A}$ and $n \in \mathbb{N} \cup \{0\}$. Notice that $\delta_{b_1, b_2} \in B(\mathcal{A})$, the set of all bounded linear mappings from $\mathcal{A}$ into itself, and so, $e^{\delta_{b_1, b_2}} = \sum_{n = 0}^{\infty}\frac{\delta_{b_1, b_2}^{n}}{n !}$. Let $a$ be an arbitrary element of $\mathcal{A}$. Therefore, we have
\begin{align*}
e^{\delta_{b_1, b_2}}(a) & = \sum_{n = 0}^{\infty}\frac{\delta_{b_1, b_2}^{n}(a)}{n !} \\ & = \sum_{n = 0}^{\infty}\frac{1}{n!}\sum_{k = 0}^{n}(-1)^{k}\Big(_{k}^{n}\Big)b_1^{n - k} a b_2^{k} \\ & = \sum_{n = 0}^{\infty}\sum_{k = 0}^{n}\frac{(-1)^k n !}{n !(n - k)! k!}b_1^{n - k} a b_2^{k} \\ & = \sum_{n = 0}^{\infty}\sum_{k = 0}^{n}\Big(\frac{b_1^{n - k}}{(n - k)!}\Big) a \Big(\frac{(-1)^k b_2^k}{k!}\Big) \\ & = \Big(\sum_{n = 0}^{\infty}\frac{b_1^n}{n!}\Big) a \Big(\sum_{n = 0}^{\infty}\frac{(-b_2)^n}{n!}\Big) \\ & = e^{b_1} a e^{-b_2} = e^{b_1} a (e^{b_2})^{-1} \\ & = \alpha_{e^{b_1}, e^{b_2}}(a),
\end{align*}
which means that $e^{\delta_{b_1, b_2}} = \alpha_{e^{b_1}, e^{b_2}}$. Thereby, our goal is achieved.
\end{proof}
Let $a$ and $b$ be two arbitrary elements of $\mathcal{A}$. Using Proposition \ref{1}, we have the following statements:
\begin{align*}
\sum_{n = 0}^{\infty}\frac{a^n}{n !} & = e^{a} = e^{a}e^{-b}e^{b} = e^{a} \textbf{1} e^{-b}e^{b} \\ & = \alpha_{e^{a}, e^{b}}(\textbf{1}) e^{b} = e^{\delta_{a, b}}(\textbf{1}) e^{b} \\ & = \sum _{n = 0}^{\infty}\frac{\delta_{a, b}^{n}(\textbf{1})}{n!}\sum_{n = 0}^{\infty}\frac{b^n}{n !} \\ & = \sum_{n = 0}^{\infty}\sum_{k = 0}^{n}\frac{\delta_{a, b}^{n - k}(\textbf{1})}{(n - k)!}\frac{b^k}{k!} \\ & = \sum_{n = 0}^{\infty}\frac{1}{n!}\sum_{k = 0}^{n}\frac{n!\delta_{a, b}^{n - k}(\textbf{1})b^{k}}{(n - k)!k!} \\ & = \sum_{n = 0}^{\infty}\frac{1}{n!}\sum_{k = 0}^{n}\Big(_{k}^{n}\Big)\delta_{a, b}^{n - k}(\textbf{1})b^{k},
\end{align*}
which means that
\begin{align}
\sum_{n = 0}^{\infty}\frac{a^n}{n !} = \sum_{n = 0}^{\infty}\frac{1}{n!}\sum_{k = 0}^{n}\Big(_{k}^{n}\Big)\delta_{a, b}^{n - k}(\textbf{1})b^{k},
\end{align}
for any $a, b \in \mathcal{A}$. Using equation (2.2), we guess the following formula:

$$a^{n} = \sum_{k = 0}^{n}\Big(_{k}^{n}\Big)\delta_{a, b}^{n - k}(\textbf{1})b^{k},$$ for any $a, b \in \mathcal{A}$. In the next theorem, we prove the above formula in any unital algebra.
\begin{theorem} \label{2} Let $\mathcal{A}$ be a unital algebra. Then for any $a, b \in \mathcal{A}$ and any non-negative integer $n$, it holds that
\begin{align}
a^{n} = \sum_{k = 0}^{n}\Big(_{k}^{n}\Big)\delta_{a, b}^{n - k}(\textbf{1})b^{k}.
\end{align}
\end{theorem}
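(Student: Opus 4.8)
The plan is to argue by induction on $n$, mirroring the inductive bookkeeping already used in Proposition \ref{1}. The base case $n = 0$ is immediate since $\delta_{a,b}^{0}(\textbf{1}) = \textbf{1} = a^{0}$, and one may also check $n = 1$ directly: the right-hand side is $\delta_{a,b}(\textbf{1}) + b = (a - b) + b = a$. The single algebraic fact driving the induction is that, because $\delta_{a,b}(x) = ax - xb$ for every $x \in \mathcal{A}$, we have $a\,\delta_{a,b}^{m}(\textbf{1}) = \delta_{a,b}^{m+1}(\textbf{1}) + \delta_{a,b}^{m}(\textbf{1})\,b$, obtained by taking $x = \delta_{a,b}^{m}(\textbf{1})$ in the rearrangement $ax = \delta_{a,b}(x) + xb$.

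Assuming the formula for $n$, I would left-multiply the inductive hypothesis by $a$ and substitute the displayed identity into each summand, which splits $a^{n+1}$ into two sums,
\[
a^{n+1} = \sum_{k=0}^{n}\binom{n}{k}\delta_{a,b}^{n+1-k}(\textbf{1})b^{k} + \sum_{k=0}^{n}\binom{n}{k}\delta_{a,b}^{n-k}(\textbf{1})b^{k+1}.
\]
Reindexing the second sum by $k \mapsto k-1$ turns it into $\sum_{k=1}^{n+1}\binom{n}{k-1}\delta_{a,b}^{n+1-k}(\textbf{1})b^{k}$, so that both sums now carry the common factor $\delta_{a,b}^{n+1-k}(\textbf{1})b^{k}$. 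The term $k=0$ comes only from the first sum and the term $k=n+1$ only from the second; for the overlapping range $1 \le k \le n$ the coefficients add as $\binom{n}{k} + \binom{n}{k-1} = \binom{n+1}{k}$ by Pascal's Rule. Since $\binom{n+1}{0} = \binom{n+1}{n+1} = 1$, the boundary terms fit the same pattern and the whole expression collapses to $\sum_{k=0}^{n+1}\binom{n+1}{k}\delta_{a,b}^{n+1-k}(\textbf{1})b^{k}$, completing the step.

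The only delicate point is the index bookkeeping in combining the two sums and correctly isolating the two boundary terms before applying Pascal's Rule; everything else is routine. As an alternative that avoids induction entirely, one could insert the closed form $\delta_{a,b}^{n-k}(\textbf{1}) = \sum_{j=0}^{n-k}(-1)^{j}\binom{n-k}{j}a^{n-k-j}b^{j}$ from (2.1) directly into the right-hand side. Every resulting monomial has the shape $a^{p}b^{n-p}$, and collecting the coefficient of $a^{p}b^{n-p}$ reduces, via the identity $\binom{n}{k}\binom{n-k}{p} = \binom{n}{p}\binom{n-p}{k}$, to $\binom{n}{p}\sum_{k}(-1)^{n-p-k}\binom{n-p}{k} = \binom{n}{p}(1-1)^{n-p}$, which vanishes unless $p = n$ and equals $1$ when $p = n$. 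This again yields $a^{n}$, and I would keep it in reserve as a cross-check on the inductive computation.
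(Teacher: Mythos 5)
Your proof is correct, and both of your arguments are cleaner than the one in the paper. The paper also inducts on $n$ and invokes Pascal's Rule, but it works backwards: it starts from $\sum_{k=0}^{n+1}\binom{n+1}{k}\delta_{a,b}^{n+1-k}(\textbf{1})b^{k}$, splits the coefficient via Pascal, and then reassembles the pieces using the generalized-derivation product rule $\delta_{a,b}(xy)=\delta_{a,b}(x)y+xd_{b}(y)$, pulling $\delta_{a,b}$ through the sum to recognize $\delta_{a,b}(a^{n})=a^{n+1}-a^{n}b$ and cancelling the terms $d_{b}(b^{k})=bb^{k}-b^{k}b=0$; this takes roughly a page of index manipulation. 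You instead go forward, left-multiplying the induction hypothesis by $a$ and using only the defining rearrangement $ax=\delta_{a,b}(x)+xb$ with $x=\delta_{a,b}^{m}(\textbf{1})$, after which a single reindexing and Pascal's Rule finish the step; this isolates the one algebraic fact that makes the formula work and avoids the Leibniz-rule machinery entirely. Your reserve argument --- substituting the closed form $\delta_{a,b}^{n-k}(\textbf{1})=\sum_{j}(-1)^{j}\binom{n-k}{j}a^{n-k-j}b^{j}$ from (2.1) and collecting the coefficient of $a^{p}b^{n-p}$ via trinomial revision and $(1-1)^{n-p}$ --- is also valid (the grouping is legitimate since all coefficients are scalars) and is genuinely different from anything in the paper: it is non-inductive and makes transparent that the theorem is a purely combinatorial cancellation, at the cost of presupposing formula (2.1).
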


\begin{proof} Obviously, the above-mentioned formula holds true for $n = 0, 1$. We now proceed by induction. Suppose that (2.3) is true for an arbitrary positive integer $n$. We are going to show that $a^{n + 1} = \sum_{k = 0}^{n + 1}\Big(_{k}^{n + 1}\Big)\delta_{a, b}^{n + 1 - k}(\textbf{1})b^{k}$. We have the following expressions:
\begin{align*}
\sum_{k = 0}^{n + 1}\Big(_{k}^{n + 1}\Big)\delta_{a, b}^{n + 1 - k}(\textbf{1})b^{k} & = b^{n + 1} + \sum_{k = 1}^{n}\Big(_{k}^{n + 1}\Big)\delta_{a, b}^{n + 1 - k}(\textbf{1})b^{k} + \delta_{a, b}^{n + 1}(\textbf{1}) \\ & = b^{n + 1} + \sum_{k = 1}^{n}\Bigg[\Big(_{k}^{n}\Big) + \Big(_{k - 1}^{n}\Big)\Bigg]\delta_{a, b}^{n + 1 - k}(\textbf{1})b^{k} + \delta_{a, b}^{n + 1}(\textbf{1}) \\ & = b^{n + 1} + \sum_{k = 1}^{n}\Big(_{k}^{n}\Big) \delta_{a, b}\Bigg(\delta_{a, b}^{n - k}(\textbf{1})\Bigg)b^{k} + \\ & \sum_{k = 1}^{n}\Big(_{k - 1}^{n}\Big)\delta_{a, b}^{n + 1 - k}(\textbf{1})b^{k} + \delta_{a, b}^{n + 1}(\textbf{1})\\ & = b^{n + 1} + \sum_{k = 1}^{n}\Big(_{k}^{n}\Big) \Bigg[\delta_{a, b}\Bigg(\delta_{a, b}^{n - k}(\textbf{1})b^{k}\Bigg) - \delta_{a, b}^{n - k}(\textbf{1})d_{b}(b^{k})\Bigg] + \\ & \sum_{k = 1}^{n}\Big(_{k - 1}^{n}\Big)\delta_{a, b}^{n + 1 - k}(\textbf{1})b^{k} + \delta_{a, b}^{n + 1}(\textbf{1}) \\ & = b^{n + 1} + \sum_{k = 1}^{n}\Big(_{k}^{n}\Big) \delta_{a, b}\Bigg(\delta_{a, b}^{n - k}(\textbf{1})b^{k}\Bigg) - \sum_{k = 1}^{n}\Big(_{k}^{n}\Big)\delta_{a, b}^{n - k}(\textbf{1})d_{b}(b^{k}) \\ & + \sum_{k = 1}^{n}\Big(_{k - 1}^{n}\Big)\delta_{a, b}^{n + 1 - k}(\textbf{1})b^{k} + \delta_{a, b}^{n + 1}(\textbf{1}) \\ & = b^{n + 1} + \delta_{a, b}\Bigg(\sum_{k = 1}^{n}\Big(_{k}^{n}\Big) \delta_{a, b}^{n - k}(\textbf{1})b^{k}\Bigg) - \sum_{k = 1}^{n}\Big(_{k}^{n}\Big)\delta_{a, b}^{n - k}(\textbf{1})d_{b}(b^{k}) \\ & + \sum_{k = 1}^{n}\Big(_{k - 1}^{n}\Big)\delta_{a, b}^{n + 1 - k}(\textbf{1})b^{k} + \delta_{a, b}^{n + 1}(\textbf{1})\\
& = b^{n + 1} + \delta_{a, b}\Bigg[\sum_{k = 0}^{n}\Big(_{k}^{n}\Big) \delta_{a, b}^{n - k}(\textbf{1})b^{k} - \delta_{a, b}^{n}(\textbf{1})\Bigg] - \sum_{k = 1}^{n}\Big(_{k}^{n}\Big)\delta_{a, b}^{n - k}(\textbf{1})d_{b}(b^{k}) \\ & + \sum_{k = 1}^{n}\Big(_{k - 1}^{n}\Big)\delta_{a, b}^{n + 1 - k}(\textbf{1})b^{k} + \delta_{a, b}^{n + 1}(\textbf{1}) \\
& = b^{n + 1} + \delta_{a, b}\Bigg[a^{n} - \delta_{a, b}^{n}(\textbf{1})\Bigg] - \sum_{k = 1}^{n}\Big(_{k}^{n}\Big)\delta_{a, b}^{n - k}(\textbf{1})(b b^{k} - b^{k}b) \\ & + \sum_{k = 1}^{n}\Big(_{k - 1}^{n}\Big)\delta_{a, b}^{n + 1 - k}(\textbf{1})b^{k} + \delta_{a, b}^{n + 1}(\textbf{1}) \\ & = b^{n + 1} + \delta_{a, b}(a^n) - \delta_{a, b}^{n + 1}(\textbf{1}) + \sum_{k = 1}^{n}\Big(_{k - 1}^{n}\Big)\delta_{a, b}^{n + 1 - k}(\textbf{1})b^{k} + \delta_{a, b}^{n + 1}(\textbf{1}) \\
 \end{align*}
 \begin{align*}
 & = b^{n + 1} + \Big(a a^n - a^{n}b\Big) + \sum_{k = 1}^{n}\Big(_{k - 1}^{n}\Big)\delta_{a, b}^{n + 1 - k}(\textbf{1})b^{k} \\ & = b^{n + 1} + a^{n + 1} - a^{n}b + \sum_{k = 0}^{n - 1}\Big(_{k}^{n}\Big)\delta_{a, b}^{n - k}(\textbf{1})b^{k + 1} \\ & = b^{n + 1} + a^{n + 1} - a^{n}b + \Bigg[\sum_{k = 0}^{n}\Big(_{k}^{n}\Big)\delta_{a, b}^{n - k}(\textbf{1})b^{k} - b^{n}\Bigg]b \\ & = b^{n + 1} + a^{n + 1} - a^{n}b + \Bigg[a^{n} - b^{n}\Bigg]b \\ & = b^{n + 1} + a^{n + 1} - a^{n}b + a^{n}b - b^{n + 1} \\ & = a^{n + 1}.
\end{align*}
It means that $a^{n + 1} = \sum_{k = 0}^{n + 1}\Big(_{k}^{n + 1}\Big)\delta_{a, b}^{n + 1 - k}(\textbf{1})b^{k}$. Thereby, our assertion is completely proved.
\end{proof}

In the following, we present some consequences of the above theorem.

\begin{corollary} Let $\mathcal{A}$ be a unital algebra, and let $a, b$ be two arbitrary elements of $\mathcal{A}$. Then \\
i) $a^{n} - b^{n} = \sum_{k = 0}^{n - 1}\sum_{j = 0}^{n - k}\Big(_{k}^{n}\Big)\Big(_{j}^{n - k}\Big)(-1)^{j}a^{n - k - j}b^{j + k}$ for any $n \geq 1$,\\
ii) (The non-commutative Newton's binomial formula)
\begin{align*}
(a + b)^n & = \sum_{k = 0}^{n}\Big(_{k}^{n}\Big)\delta_{a + b, c}^{n - k}(\textbf{1})c^{k} \\ & = \sum_{k = 0}^{n}\sum_{j = 0}^{n - k}\Big(_{k}^{n}\Big)\Big(_{j}^{n - k}\Big)(-1)^{j}(a + b)^{n - k - j}c^{j + k},
\end{align*}
where $c$ is an arbitrary element of $\mathcal{A}$ and $n \in \mathbb{N} \cup \{0\}$.
\end{corollary}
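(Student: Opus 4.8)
The plan is to derive both identities as direct consequences of Theorem \ref{2} together with the explicit expansion (2.1). The key observation is that applying formula (2.1) with $b_1 = a$ and $b_2 = b$ to the element $\textbf{1}$ yields an explicit expression for the iterated generalized derivation of the identity, namely
$$\delta_{a, b}^{m}(\textbf{1}) = \sum_{j = 0}^{m}(-1)^{j}\Big(_{j}^{m}\Big)a^{m - j} b^{j},$$
since $a^{m-j}\,\textbf{1}\,b^{j} = a^{m-j}b^{j}$. This converts the statement of Theorem \ref{2}, which involves the operator $\delta_{a,b}$, into a purely algebraic double sum.

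For part (i), I would substitute the above expression (with $m = n-k$) into the right-hand side of (2.3). Absorbing the factor $b^{k}$ into $b^{j}$ and combining the two binomial coefficients, Theorem \ref{2} becomes
$$a^{n} = \sum_{k = 0}^{n}\sum_{j = 0}^{n-k}\Big(_{k}^{n}\Big)\Big(_{j}^{n-k}\Big)(-1)^{j}a^{n - k - j} b^{j+k}.$$
The crucial step is then to isolate the term $k = n$: here $n-k = 0$ forces $j = 0$, so that single term equals $\Big(_{n}^{n}\Big)\Big(_{0}^{0}\Big)a^{0}b^{n} = b^{n}$. Transferring $b^{n}$ to the left-hand side and letting the outer index run only up to $n-1$ produces exactly the formula asserted in (i).

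For part (ii), the first equality is obtained simply by applying Theorem \ref{2} with $a$ replaced by $a+b$ and $b$ replaced by $c$; this is legitimate because the theorem holds for arbitrary elements of $\mathcal{A}$, and $a+b,\,c$ are such elements. The second equality then follows by inserting the explicit expansion of $\delta_{a+b,c}^{n-k}(\textbf{1})$ coming from (2.1) (now with $b_1 = a+b$, $b_2 = c$) and again absorbing the trailing $c^{k}$ into $c^{j}$ to form $c^{j+k}$, precisely as in part (i).

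I expect no serious obstacle here: the entire argument is a substitution of (2.1) into Theorem \ref{2} followed by careful bookkeeping of the summation indices. The only point demanding genuine attention is the index manipulation in part (i)—verifying that the $k=n$ contribution collapses to $b^{n}$ and that the surviving double sum matches the stated ranges—which is routine once the explicit form of $\delta_{a,b}^{m}(\textbf{1})$ is in hand.
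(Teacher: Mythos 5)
Your proposal is correct and follows essentially the same route as the paper: both derive the double-sum form by substituting the explicit expansion of $\delta_{a,b}^{m}(\textbf{1})$ from Proposition \ref{1} into Theorem \ref{2}, peel off the $k=n$ term equal to $b^{n}$ for part (i), and obtain part (ii) by the substitution $a \mapsto a+b$, $b \mapsto c$ in (2.3). No gaps; the index bookkeeping you describe is exactly what the paper carries out.
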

\begin{proof}(i) By Theorem \ref{2} we know that
\begin{align*}
a^{n} = \sum_{k = 0}^{n}\Big(_{k}^{n}\Big)\delta_{a, b}^{n - k}(\textbf{1})b^{k} \ for \ all \ a, b \in \mathcal{A}.
\end{align*}
It follow from Proposition \ref{1} that
\begin{align*}
\delta_{a, b}^{n}(c) = \sum_{k = 0}^{n}(-1)^{k}\Big(_{k}^{n}\Big)a^{n - k} c b^{k},
\end{align*}
for each non-negative integer $n$ and $a, b, c \in \mathcal{A}$. By considering this formula, we obtain that
\begin{align*}
a^{n} & = \sum_{k = 0}^{n}\Big(_{k}^{n}\Big)\delta_{a, b}^{n - k}(\textbf{1})b^{k} \\ & = \sum_{k = 0}^{n}\sum_{j = 0}^{n - k}\Big(_{k}^{n}\Big)\Big(_{j}^{n - k}\Big)(-1)^{j}a^{n - k - j}b^{j + k} \\ & = \sum_{k = 0}^{n - 1}\sum_{j = 0}^{n - k}\Big(_{k}^{n}\Big)\Big(_{j}^{n - k}\Big)(-1)^{j}a^{n - k - j}b^{j + k} + b^{n}.
\end{align*}
Therefore, we have
\begin{align*}
a^{n} - b^{n} = \sum_{k = 0}^{n - 1}\sum_{j = 0}^{n - k}\Big(_{k}^{n}\Big)\Big(_{j}^{n - k}\Big)(-1)^{j}a^{n - k - j}b^{j + k},
\end{align*}
for any $a, b \in \mathcal{A}$ and $n \in \mathbb{N}$. \\

(ii) Replacing $a + b$ instead of $a$ and $c$ instead of $b$ in (2.3) and then using Proposition \ref{1}, we get that
\begin{align*}
(a + b)^n & = \sum_{k = 0}^{n}\Big(_{k}^{n}\Big)\delta_{a + b, c}^{n - k}(\textbf{1})c^{k} \\ & = \sum_{k = 0}^{n}\sum_{j = 0}^{n - k}\Big(_{k}^{n}\Big)\Big(_{j}^{n - k}\Big)(-1)^{j}(a + b)^{n - k - j}c^{j + k}.
\end{align*}
In particular, for $c = a, b$ we have
\begin{align*}
(a + b)^n & = \sum_{k = 0}^{n}\Big(_{k}^{n}\Big)\delta_{a + b, b}^{n - k}(\textbf{1})b^{k} \\ & = \sum_{k = 0}^{n}\sum_{j = 0}^{n - k}\Big(_{k}^{n}\Big)\Big(_{j}^{n - k}\Big)(-1)^{j}(a + b)^{n - k - j}b^{j + k} \\ & = \sum_{k = 0}^{n}\sum_{j = 0}^{n - k}\Big(_{k}^{n}\Big)\Big(_{j}^{n - k}\Big)(-1)^{j}(a + b)^{n - k - j}a^{j + k}.
\end{align*}
\end{proof}
 We denote the commutator $ab - ba$ by $[a, b]$ for all $a, b \in \mathcal{A}$. As applications of Theorem \ref{2}, we have
\begin{align*}
& [a, b]^{n} = \sum_{k = 0}^{n}\Big(_{k}^{n}\Big)\delta_{[a,b], c}^{n - k}(\textbf{1})c^{k} = \sum_{k = 0}^{n}\sum_{j = 0}^{n - k}\Big(_{k}^{n}\Big)\Big(_{j}^{n - k}\Big)(-1)^{j}[a, b]^{n - k - j}c^{j + k}, \\ & (ab)^n = \sum_{k = 0}^{n}\sum_{j = 0}^{n - k}\Big(_{k}^{n}\Big)\Big(_{j}^{n - k}\Big)(-1)^{j}(ab)^{n - k - j}c^{j + k},
\end{align*}
for any $a, b, c \in \mathcal{A}$ and $n \in \mathbb{N} \cup \{0\}$.
\\
\\
In the following, we want to obtain a non-commutative binomial formula in a non-unital algebra. An algebra $\mathcal{A}$ can always be embedded into an algebra with identity as follows. Let $\mathfrak{A}$ denote the set of all pairs $(a, \lambda)$, $a \in \mathcal{A}$, $\lambda \in \mathbb{C}$, that is, $\mathfrak{A} = \mathcal{A} \bigoplus \mathbb{C}$. Then $\mathfrak{A}$ becomes an algebra if the linear space operations and
multiplication are defined by
$(a, \lambda) + (b, \mu) = (a + b, \lambda + \mu)$, $\mu(a, \lambda) = (\mu a, \mu \lambda)$
and $(a, \lambda)(b, \mu) = (ab + \lambda b + \mu a, \lambda \mu)$
for $a, b \in \mathcal{A}$ and $\lambda, \mu \in \mathbb{C}$. A simple calculation shows that the element
$\textbf{1} = (0, 1) \in \mathfrak{A}$ is an identity for $\mathfrak{A}$. Moreover, the mapping $a \rightarrow (a, 0)$ is an
algebra isomorphism of $\mathcal{A}$ onto an ideal of codimension one in $\mathfrak{A}$. Obviously,
$\mathfrak{A}$ is commutative if and only if $\mathcal{A}$ is commutative.

Let $\mathcal{A}$ be a non-unital algebra. We introduce a mapping $\Delta_{(a, \alpha), (b, \beta)}: \mathfrak{A} \rightarrow \mathfrak{A}$ by $\Delta_{(a, \alpha), (b, \beta)}(c, \gamma) = (a, \alpha)(c, \gamma) - (c, \gamma)(b, \beta)$ for all $(a, \alpha), (b, \beta), (c, \gamma) \in \mathfrak{A}$. It is evident that $\Delta_{(a, \alpha), (b, \beta)}$ is an inner generalized derivation on the unital algebra $\mathfrak{A}$. Since $\mathfrak{A}$ is unital, it follows from Theorem \ref{2} that
\begin{align*}
(a, \alpha)^{n} = \sum_{k = 0}^{n}\Big(_{k}^{n}\Big)\Delta_{(a, \alpha), (b, \beta)}^{n - k}(0, 1)(b, \beta)^{k},
\end{align*}
and consequently,
\begin{align}
a^{n} = \sum_{k = 0}^{n}\Big(_{k}^{n}\Big)\Delta_{(a, 0), (b, \beta)}^{n - k}(0, 1)(b, \beta)^{k},
\end{align}
for all $a, b \in \mathcal{A}$ and $\alpha, \beta \in \mathbb{C}$. We can thus deduce that
\begin{align}
(a + b)^{n} = \sum_{k = 0}^{n}\Big(_{k}^{n}\Big)\Delta_{(a + b, 0), (c, \gamma)}^{n - k}(0, 1)(c, \gamma)^{k},
\end{align}
for all $a, b, c \in \mathcal{A}$ and $\gamma \in \mathbb{C}$. The above discussion shows that we can also achieve our formulas in non-unital algebras.
\\
\\
In the following, we are going to establish a non-commutative Newton's binomial formula with a negative power. Let $\mathcal{A}$ be a unital algebra, and let $a \in \mathcal{A}$. The spectrum of $a$ is $ \mathfrak{S}(a)= \{\lambda \in \mathbb{C} \ : \ \lambda \textbf{1} - a \not \in Inv (\mathcal{A}) \}$ and the spectral radius of $a$ is $\nu(a) = sup \Big\{|\lambda| \ : \ \lambda \in \mathfrak{S}(a) \Big\}.$ We know that if $|z| < 1$ ($z \in \mathbb{C}$), then we have
\begin{align*}
(1 + z)^{-n} = \sum_{k = 0}^{\infty}\Big(_{k}^{-n}\Big)z^{k}, \ for \ any \ n \in \mathbb{N},
\end{align*}
where $\Big(_{k}^{-n}\Big) = \frac{(-n)(-n - 1) ... (-n - k + 1)}{k!} = \frac{(-1)^{k}n(n + 1) ... (n + k - 1)}{k!}$. Let $z$ and $\lambda$ be two complex numbers such that $|z| < |\lambda|$. Then for any $n \in \mathbb{N}$, we see that
\begin{align}
(\lambda + z)^{-n} = \sum_{k = 0}^{\infty}\Big(_{k}^{-n}\Big)\lambda^{-n - k}z^{k}.
\end{align}

Let $\mathcal{A}$ be a unital Banach algebra and let $a$ be an arbitrary element of $\mathcal{A}$. It follows from Theorem 4.2.2 of \cite{D1} that if $f = \sum_{k = 0}^{\infty}\alpha_k Z^{k}$ has radius of convergence $r$, where $r > \nu(a)$, then $f(a) = \sum_{k = 0}^{\infty}\alpha_k a^{k}$. Here $Z$ is the coordinate functional, so that $Z : z \rightarrow z$ on $\mathbb{C}$. Let $\lambda$ be a complex number such that $\nu(a) < |\lambda|$ and let $f = (\lambda + Z)^{-n}$ and $\alpha_k = \Big(_{k}^{-n}\Big)\lambda^{-n - k}$. It follows from (2.6) that $f = (\lambda + Z)^{-n} = \sum_{k = 0}^{\infty}\alpha_k Z^{k}$ and therefore, we have
\begin{align}
f(a) = (\lambda \textbf{1} + a)^{-n} = \sum_{k = 0}^{\infty}\alpha_k a^{k} = \sum_{k = 0}^{\infty}\Big(_{k}^{-n}\Big)\lambda^{-n - k}a^{k}.
\end{align}
Let $a, b \in \mathcal{A}$ and $\lambda \in \mathbb{C}$ such that $\nu(a + b - \lambda \textbf{1}) < |\lambda|$. It follows from (2.7) that
$(\lambda \textbf{1} + a + b - \lambda \textbf{1})^{-n} = \sum_{k = 0}^{\infty}\Big(_{k}^{-n}\Big)\lambda^{-n - k}(a + b - \lambda \textbf{1})^{k}$. Letting $b_1 = b - \lambda \textbf{1}$, we have
\begin{align*}
(a + b)^{-n} & = \sum_{k = 0}^{\infty}\Big(_{k}^{-n}\Big)\lambda^{-n - k}(a + b_1)^{k} \\ & = \sum_{k = 0}^{\infty}\sum_{j = 0}^{k}\Big(_{k}^{-n}\Big)\Big(_{j}^{k}\Big)\lambda^{-n - k}\delta_{a + b_1, b_1}^{k - j}(\textbf{1})b_1^{j},
\end{align*}
for any $n \in \mathbb{N}$.

\vspace{.25cm}
%{\bf Acknowledgements}\\
%The author is greatly indebted to the referee for his/her valuable suggestions and careful reading of the paper.

\end{document}